\newtheorem{theorem}{Theorem}[section]
\newtheorem{corollary}[theorem]{Corollary}
\journal{\tt  arXiv.org}
\begin{document}

\begin{frontmatter}



\title{Randomly Weighted Averages: A Multivariate Case}

\author{Hazhir Homei}

\address{Department of Statistics, Faculty of Mathematical
Sciences, \\ University of Tabriz,  P.O.Box 51666--17766, Tabriz, IRAN. \\
                       {\tt homei@tabrizu.ac.ir}}

\begin{abstract}
Stochastic linear combinations of some random vectors are studied where the distribution of the random vectors and the joint distribution of their coefficients are Dirichlet.
A method is provided for calculating the distribution of these combinations which has been studied before by some authors.    Our main result is a generalization of
some existing results with a simpler proof.
 \end{abstract} 
 
\bigskip

\begin{keyword}{\sf 
Multivariate randomly weighted average; Multivariate Stieltjes transform; Dirichlet Distributions; Multivariate stable distributions; Randomly Linear Transformations; Dependent Components; Lifetime.}
\end{keyword}
\end{frontmatter}



\section{Introduction}
For given random variables $X_1,\cdots,X_n$ the distribution of the stochastic linear combination $Z\!=\!\sum_{i=1}^nW_iX_i$  is used for the problems in lifetime, stochastic matrices, neural networks and other applications
in sociology and biology.
Let $X_i$ ($1\leqslant\!i\leqslant\!n$) be the lifetime measured in a lab and $0\!\leqslant\!W_i\!\leqslant\!1$ be the random effect of the environment on it; so  $W_iX_i\!\leqslant\!X_i$ and thus
$\sum_{i=1}^nW_iX_i$ is the average
lifetime in the environment (see Homei~(2015)).
Recently, several authors have focused on computing the lifetime of systems in the real conditions.
Indeed, the randomly linear combination of random vectors  have   many applications including
 traditional portfolio selection models,  relationship between attitudes and behavior, number of cancer cells in tumor biology, stream flow in hydrology (Nadarajah \& Kotz~(2005)),  branching processes, infinite particle systems and probabilistic algorithms, and vehicle speed and  lifetime (cf.~Homei~(2015), Rezapour \&  Alamatsaz~(2014) and the references therein)  so finding their distributions has attracted the attentions of numerous researchers.

In this paper, considering the dependent components and the real environmental conditions, the distribution of lifetimes (in case of the Dirichlet distributions) is calculated
showing that
the  main result obtained here covers several previous results and provides a simpler proof (cf. e.g.
Johnson \& Kotz~(1990a), Sethuraman~(1994), van Assche~(1987), Volodin \&  Kotz \&  Johnson~(1993)).

The inner product of two random vectors was introduced in Homei~(2014) and the exact distribution of this product was investigated for some random vectors with Beta and Dirichlet distributions. In this paper   a new generalization for the inner product of two random vectors is introduced.
For a random vector ${\bf W}'=\langle W_1,\cdots,W_n\rangle$ and a vector $\underline{{\bf X}}=\langle {\bf X}_1,\cdots,{\bf X}_n\rangle$ of random vectors (each ${\bf X}_i$ being  $k$-dimensional) the inner product of ${\bf X}$ and ${\bf W}$ is essentially the linear transformation of ${\bf W}$ under the $k\times n$ matrix $\underline{{\bf X}}$ which is  ${\bf Z}\!=\!\sum_{i=1}^n W_i{\bf X}_i$.
 We  assume that
  ${\bf W}$ is  independent from ${\bf  X}_1,\cdots,{\bf X}_n$ and   has  Dirichlet
     distribution; also  ${\bf X}_i$'s  have  Dirichlet  distributions.
Identifying the distribution of ${\bf Z}$  usually requires
\begin{itemize}
\item[(i)] either some long computations with combinatorial identities  (see e.g.
    Volodin \&  Kotz \&  Johnson~(1993), Johnson \& Kotz~(1990), Johnson \& Kotz~(1990a), Sethuraman~(1994), Homei~(2014));
\item[(ii)] or advanced techniques requiring performing certain transformations and solving differential equations (see e.g. Homei~(2012), Soltani \& Homei (2009), van Assche~(1987)  or Homei~(2015) and the  references therein).
\end{itemize}

\noindent
In this paper a new way is introduced to identify the distribution of randomly linear combinations (of Dirichlet  distributions) by which a class of stochastic differential equations can be solved; this new method is much simpler than the existing ones.

\section{The Main Result}
Our main result identifies the distribution of the randomly linear combination when the coefficients come from a  Dirichlet  distribution.

\begin{theorem}\label{main}
If  ${\bf X}_1,\cdots,{\bf X}_n$ are  independent $k$-variate   random vectors with respectively  $Dirichlet(\alpha^{(1)}), \cdots,Dirichlet(\alpha^{(n)})$ distributions, for  some $k$-dimensional vectors $\alpha^{(j)}=\langle \alpha^{(j)}_1,\cdots,\alpha^{(j)}_k\rangle$ \textup{(}$j=1,\cdots,n$\textup{)}, and the  random vector ${\bf W}=\langle W_1,\cdots,W_n\rangle$  is  independent from ${\bf X}_1,\cdots,{\bf X}_n$ and has the distribution   $$Dirichlet\left(\sum_{i=1}^{k}\alpha_i^{(1)},\cdots,
\sum_{i=1}^{k}\alpha_i^{(n)}\right),$$ then
the randomly linear combination  ${\bf Z}=\sum_{i=1}^n W_i{\bf X}_i$ has the distribution $$Dirichlet\left(\sum_{j=1}^{n}\alpha_1^{(j)},\cdots,
\sum_{j=1}^{n}\alpha_k^{(j)}\right).$$
\end{theorem}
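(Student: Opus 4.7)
The plan is to realise every Dirichlet vector appearing in the statement as the normalisation of a suitable array of independent Gamma variables, and then to read off the law of ${\bf Z}$ directly from that representation.

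First I would introduce an $n\times k$ array of mutually independent variables $Y_{i,j}\sim\Gamma(\alpha_j^{(i)})$ \textup{(}all with unit rate\textup{)}, and set $S_i:=\sum_{j=1}^{k}Y_{i,j}$ and $S:=\sum_{i=1}^{n}S_i$. By the classical Gamma--Dirichlet construction, the normalised row ${\bf X}_i:=\langle Y_{i,1}/S_i,\cdots,Y_{i,k}/S_i\rangle$ has the prescribed $Dirichlet(\alpha^{(i)})$ law, the totals $S_1,\ldots,S_n$ are independent with $S_i\sim\Gamma(\sum_{j=1}^{k}\alpha_j^{(i)})$, and each $S_i$ is independent of the corresponding ${\bf X}_i$. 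A second application of the same construction to the Gammas $S_1,\ldots,S_n$ then shows that ${\bf W}:=\langle S_1/S,\cdots,S_n/S\rangle$ has precisely the $Dirichlet\bigl(\sum_{j=1}^{k}\alpha_j^{(1)},\cdots,\sum_{j=1}^{k}\alpha_j^{(n)}\bigr)$ law demanded in the hypothesis.

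Next I would verify that this coupling realises the correct \emph{joint} distribution. The pairs $({\bf X}_i,S_i)$ are mutually independent as $i$ varies, and within each pair ${\bf X}_i\perp S_i$; hence $({\bf X}_1,\ldots,{\bf X}_n)$ is jointly independent of $(S_1,\ldots,S_n)$, and therefore of the function ${\bf W}$ of those totals. So the random objects produced by the construction satisfy exactly the independence assumption in the theorem.

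Finally I would compute the $j$-th coordinate of the weighted average:
$$Z_j\;=\;\sum_{i=1}^{n}W_iX_{i,j}\;=\;\sum_{i=1}^{n}\frac{S_i}{S}\cdot\frac{Y_{i,j}}{S_i}\;=\;\frac{1}{S}\sum_{i=1}^{n}Y_{i,j}.$$
Writing $T_j:=\sum_{i=1}^{n}Y_{i,j}$, the column sums $T_1,\ldots,T_k$ are independent with $T_j\sim\Gamma(\sum_{i=1}^{n}\alpha_j^{(i)})$ and satisfy $\sum_{j=1}^{k}T_j=S$, so a third application of the Gamma--Dirichlet construction identifies ${\bf Z}=\langle T_1/S,\cdots,T_k/S\rangle$ as $Dirichlet\bigl(\sum_{i=1}^{n}\alpha_1^{(i)},\cdots,\sum_{i=1}^{n}\alpha_k^{(i)}\bigr)$, which is the claimed distribution.

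The main obstacle is conceptual rather than technical: one has to spot the right joint Gamma array in which the row sums $S_i$ double as the Dirichlet weights $W_i$ \emph{without} destroying their independence from the ${\bf X}_i$'s, while simultaneously the column sums $T_j$ turn out to be exactly $S\cdot Z_j$. Once the array $(Y_{i,j})$ is in place, the proof reduces to three invocations of the Gamma--Dirichlet independence theorem, with no combinatorial identities, integral transforms or differential equations required.
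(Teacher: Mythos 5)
Your proof is correct and is essentially the paper's own argument: the paper's first proof uses exactly this Gamma--Dirichlet representation (taking $Y_j\sim\Gamma\bigl(\sum_{i=1}^k\alpha_i^{(j)}\bigr)$ so that ${\bf T}_j=Y_j{\bf X}_j$ is an array of independent Gamma variables whose column sums, normalised by the grand total, give ${\bf Z}$), and its third proof writes out the very same $n\times k$ Gamma array $\Gamma_{ij}(\alpha_i^{(j)})$ that you introduce. If anything, your explicit factorisation argument for the joint independence of $({\bf X}_1,\cdots,{\bf X}_n)$ from ${\bf W}$ is a cleaner justification of the coupling than the paper's appeal to Basu's theorem, but the route is the same.
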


\begin{proof}[\textup{(}The First\textup{)} Proof]
Let $Y_j$ ($j=1,\cdots,n$) be  independent random variables independent from $({\bf X}_1,\cdots,{\bf X}_n)$  that have the   distribution $\Gamma(\sum_{i=1}^k\alpha^{(j)}_i,\frac{1}{\mu})$, respectively. Thus,   $$W_j\overset{d}{=}\frac{Y_j}{\big({\sum_{l=1}^nY_l}\big)} \textrm{ for } j=1,\cdots,n.$$
  The $k$-dimensional vectors ${\bf T}_j=Y_j{\bf X}_j$  have  independent components and $$\langle\Gamma(\alpha^{(j)}_1,\frac{1}{\mu}),\cdots,
\Gamma(\alpha^{(j)}_k,\frac{1}{\mu})\rangle$$ distribution. Thus, we have
 $$\sum_{j=1}^n\big(\frac{{\bf T}_j}{\sum_{l=1}^nY_l}\big)
 =\sum_{j=1}^n\frac{Y_j}{\sum_{l=1}^nY_l}{\bf X}_j\overset{d}{=}\sum_{j=1}^n W_j{\bf X}_j.$$
Now,    $\sum_{j=1}^n\big(\frac{{\bf T}_j}{\sum_{l=1}^nY_l}\big)$ has Dirichlet$(\sum_{j=1}^{n}\alpha_1^{(j)},
\cdots,\sum_{j=1}^{n}\alpha_k^{(j)})$ distribution,  so   $\sum_{j=1}^n W_j{\bf X}_j={\bf Z}$ has the same distribution.
\end{proof}

\begin{corollary}\label{cor}
Let the $k$-variate random variables ${\bf X}_1,\cdots,{\bf X}_n$ be independent and with common distribution. If ${\bf W}=\langle W_1,\cdots,W_n\rangle$ is independent from ${\bf X}_1,\cdots,{\bf X}_n$ having $Dirichlet(k\alpha,\cdots,k\alpha)$ distribution, then the randomly linear combination ${\bf Z}=\sum_{i=1}^n W_i{\bf X}_i$ has the $Dirichlet(n\alpha,\cdots,n\alpha)$ distribution if and only if ${\bf X}_i$'s ($i=1,\cdots,n$) have  $Dirichlet(\alpha,\cdots,\alpha)$ distributions.
\end{corollary}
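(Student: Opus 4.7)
The corollary is an equivalence. The forward implication is immediate from Theorem~\ref{main}: take $\alpha^{(j)} = (\alpha, \ldots, \alpha)$ for every $j$; then $\sum_{i=1}^k \alpha_i^{(j)} = k\alpha$ matches the prescribed law of $\mathbf{W}$, and the theorem hands back Dirichlet$(n\alpha, \ldots, n\alpha)$ for $\mathbf{Z}$.

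For the converse, the plan is to reuse the gamma/Dirichlet machinery that drives the theorem's proof. Let $Y_1, \ldots, Y_n$ be iid $\Gamma(k\alpha, 1)$ variables independent of $\mathbf{X}_1, \ldots, \mathbf{X}_n$, put $S := \sum_{i=1}^n Y_i \sim \Gamma(nk\alpha, 1)$ and $W_i := Y_i/S$; then $\mathbf{W}$ has the required Dirichlet law and $\mathbf{W} \perp S$. Define $\mathbf{T} := \sum_{i=1}^n Y_i \mathbf{X}_i = S\mathbf{Z}$. Since $\mathbf{Z}$ is a measurable function of $(\mathbf{W}, \mathbf{X})$ while $S$ is independent of $(\mathbf{W}, \mathbf{X})$, one has $S \perp \mathbf{Z}$; combined with $\mathbf{Z} \sim$ Dirichlet$(n\alpha, \ldots, n\alpha)$ and $S \sim \Gamma(nk\alpha, 1)$, the classical gamma-Dirichlet decomposition forces $\mathbf{T}$ to have independent $\Gamma(n\alpha, 1)$ coordinates.

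Next I would exploit the iid decomposition $\mathbf{T} = \mathbf{U}_1 + \cdots + \mathbf{U}_n$ with $\mathbf{U}_i := Y_i \mathbf{X}_i$. The characteristic-function identity $\phi_\mathbf{U}(t)^n = \phi_\mathbf{T}(t) = \prod_{j=1}^k (1 - it_j)^{-n\alpha}$, together with continuity of $\phi_\mathbf{U}$ and $\phi_\mathbf{U}(0) = 1$, pins down $\phi_\mathbf{U}(t) = \prod_{j=1}^k (1 - it_j)^{-\alpha}$, so $\mathbf{U}$ has independent $\Gamma(\alpha, 1)$ components. Consequently $V := \sum_j U_j \sim \Gamma(k\alpha, 1)$; but also $V = Y_1 \Sigma$ with $\Sigma := \sum_j X_{1,j}$ independent of $Y_1 \sim \Gamma(k\alpha, 1)$ and $\Sigma > 0$ almost surely (since each $U_j > 0$). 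Matching Mellin transforms of $V$ and $Y_1$ then yields $E[\Sigma^{it}] = 1$ for every $t \in \mathbb{R}$, whence $\Sigma = 1$ almost surely. Thus $\mathbf{X}_1 = \mathbf{U}/Y_1 = \mathbf{U}/V$, which by the standard gamma-to-Dirichlet normalisation is Dirichlet$(\alpha, \ldots, \alpha)$-distributed, as required.

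The main obstacle is the extraction of $\phi_\mathbf{U}$ as the $n$-th convolution root of $\phi_\mathbf{T}$, since the identity $\phi_\mathbf{U}^n = \phi_\mathbf{T}$ admits many complex-valued solutions a priori. I would handle it by observing that $\phi_\mathbf{T}$ is non-vanishing on $\mathbb{R}^k$, letting $\psi(t) := \prod_j (1-it_j)^{-\alpha}$ be the natural continuous $n$-th root with $\psi(0)=1$, and noting that $\phi_\mathbf{U}/\psi$ is continuous on $\mathbb{R}^k$, takes values in the discrete set of $n$-th roots of unity, and equals $1$ at the origin; connectedness of $\mathbb{R}^k$ then forces $\phi_\mathbf{U} \equiv \psi$. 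Everything else reduces to the gamma-Dirichlet manipulations already used in the theorem.
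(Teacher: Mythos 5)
Your proof is correct, and its converse half takes a genuinely different route from the paper's. The paper disposes of the corollary in one line: the ``if'' part is, exactly as in your first paragraph, the specialization $\alpha^{(j)}=(\alpha,\cdots,\alpha)$ of Theorem~\ref{main}, while the ``only if'' part is delegated entirely to the cited theorem of Volodin, Kotz \& Johnson~(1993), whose proof is a moment computation and which, as the paper's own list of special cases describes it (the case $n=k$ with all parameters equal to $a$), leaves the reader to check that it stretches to the corollary's generality. You instead prove the converse from scratch by running the gamma machinery of the paper's First Proof backwards: the independence $S\perp\mathbf{Z}$ is justified correctly (since $S\perp\mathbf{W}$ by the Lukacs-type property and $\mathbf{Y}\perp\mathbf{X}$ give $S\perp(\mathbf{W},\mathbf{X})$), so the hypothesis on $\mathbf{Z}$ becomes the statement that $\mathbf{T}=S\mathbf{Z}=\sum_i Y_i\mathbf{X}_i$ has independent $\Gamma(n\alpha,1)$ coordinates; the i.i.d.\ decomposition $\mathbf{T}=\sum_i \mathbf{U}_i$ together with your continuous-$n$-th-root argument (the characteristic function is non-vanishing, the quotient by the principal root takes values in the discrete set of $n$-th roots of unity, and $\mathbb{R}^k$ is connected) legitimately pins down $\phi_{\mathbf{U}}$, which is the one step a naive convolution-root extraction would leave open and which you handle properly; and the Mellin cancellation $E[\Sigma^{it}]=1$ validly forces $\sum_j X_{1j}=1$ almost surely, after which the gamma-to-Dirichlet normalization yields $\mathbf{X}_1\sim Dirichlet(\alpha,\cdots,\alpha)$. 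What your route buys: a self-contained characterization valid verbatim for all $n$ and $k$, with no combinatorial moment identities, and without even assuming a priori that the $\mathbf{X}_i$'s live on the simplex, since you derive that fact. What the paper's route buys: brevity and an explicit connection to the prior characterization literature it sets out to unify.
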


\begin{proof}
 The ``if" part follows from Theorem~\ref{main} and the ``only if" part follows from the theorem of Volodin \&  Kotz \& Johnson~(1993).
\end{proof}

Some similar results (to Corollary~\ref{cor}) can be found in e.g. Alamatsaz~(1993).
The following results are obtained as  special cases of
Theorem~\ref{main} and Corollary~\ref{cor}:

$\bullet\,$ Lemma~1 of Sethuraman~(1994) (for $n\!=\!2$), and

$\bullet\,$   Theorem of Volodin \&  Kotz \& Johnson~(1993)   (when $n=k$ and $\alpha^{(j)}_i\!=\!a$ for $j\!=\!1,\!\cdots\!,k$, $i\!=\!1,\!\cdots\!,k$),

\noindent which discusses the multivariate case, or the references below which discuss the single-variable case (note that for $k\!=\!2$ the Dirichlet distribution leads to the Beta distribution):


$\bullet\,$ Theorem of van Assche~(1987) (for $n\!=\!k\!=\!2, \alpha^{(1)}_1\!
=\!\alpha^{(1)}_2\!=\!\alpha^{(2)}_1\!=\!\alpha^{(2)}_2\!
=\!\frac{1}{2}$),

$\bullet\,$   Theorem~2 of Johnson \& Kotz~(1990a) (for $n\!=\!k\!=\!2,\alpha^{(1)}_1\!
=\!\alpha^{(1)}_2\!=\!\alpha^{(2)}_1\!=\!\alpha^{(2)}_2\!=\!a$),

$\bullet\,$ Theorem~2.4 of  Homei~(2014) (for $k\!=\!2$ and $\alpha_1^{(j)}=\alpha,\alpha_2^{(j)}=1-\alpha$ for $j=1,\cdots,n$),

$\bullet\,$ Theorem~1 of  Homei~(2013) (for $k\!=\!2$);

\noindent
and others; see the references in Homei~(2015).

\section{Four Other Proofs for Theorem~\ref{main} and a Variant of It}
\subsection{Moment Generation Method}
\begin{proof}[The Second Proof]
The generating moment function  of ${\bf T}_j$'s in the first proof are
$${\rm E}(e^{{\bf t}'{\bf T}_j})={\rm EE}
(e^{{\bf t}'Y_j{\bf X}_j}\mid X_j)={\rm E}(\frac{1}{1-{\bf t}'{\bf X}_j})^{\sum_{i=1}^{k}\alpha_{i}^{(j)}}.$$
By \cite[page 77]{kmr} we have $${\rm M}_{{\bf T}_j}{({\bf t})}=(\frac{1}{1-t_j})^{\sum_{i=1}^{k}\alpha_{i}^{(j)}}.$$
So, the components of the vector ${\bf T}_j$ are independent and have gamma distributions, which proves the theorem.
\end{proof}
\subsection{Applying Basu's Theorem}
\begin{proof}[The Third Proof]
We can write (for $j=1,\cdots,n$)
$${\bf X}_j \sim \left(\frac{\Gamma_{1j}(\alpha_1^{(j)})}
{\sum_{i=1}^{k}\Gamma_{ij}(\alpha_i^{(j)})},\cdots,
\frac{\Gamma_{kj}(\alpha_k^{(j)})}
{\sum_{i=1}^{k}\Gamma_{ij}(\alpha_i^{(j)})}\right),$$
and
$${\bf W}\sim \left(\frac{\sum_{i=1}^k\Gamma_{ij}(\alpha_i^{(1)})}
{\sum_{j=1}^{n}\sum_{i=1}^{k}\Gamma_{ij}
(\alpha_i^{(j)})},\cdots,
\frac{\sum_{i=1}^k\Gamma_{ij}(\alpha_i^{(n)})}
{\sum_{j=1}^{n}\sum_{i=1}^{k}\Gamma_{ij}
(\alpha_i^{(j)})}\right).$$
So,
$${\bf Z}\overset{d}{=}\left(\frac{\sum_{j=1}^{k}
\Gamma_{1j}(\alpha_1^{(j)})}
{\sum_{j=1}^{n}\sum_{i=1}^{k}\Gamma_{ij}
(\alpha_i^{(j)})},\cdots,
\frac{\sum_{j=1}^{k}\Gamma_{kj}(\alpha_k^{(j)})}
{\sum_{j=1}^{n}\sum_{i=1}^{k}\Gamma_{ij}
(\alpha_i^{(j)})}\right).$$
Let us recall that $\Gamma_{ij}(\alpha_i^{(j)})$'s (for $i=1,\cdots,k$ and $j=1,\cdots,n$) are independent random variables with gamma distributions, which implies the independence of the components of ${\bf W}$ from ${\bf X}_j$'s (Basu's Theorem).
\end{proof}
\subsection{Mathematical Induction}
\begin{proof}[The Fourth Proof]
For $n=2$ the theorem follows from \cite[Lemma~1]{sethuraman}. Suppose the theorem holds for $n$ (the induction hypothesis). We prove it for $n+1$ (the induction conclusion): By dividing the both sides of
$$\sum_{i=1}^{n+1}Y_i{\bf X}_i=\big(\sum_{i=1}^nY_i\big)\left(\sum_{j=1}^n\frac{Y_j}
{\sum_{l=1}^nY_l}{\bf X}_j\right)+Y_{n+1}{\bf X}_{n+1}$$
by $\sum_{i=1}^{n+1}Y_i$ and using the induction hypothesis we have
$$\sum_{i=1}^{n+1}\frac{Y_i}{\sum_{i=1}^{n+1}Y_i}{\bf X}_i=\big(\sum_{i=1}^n\frac{Y_i}{\sum_{i=1}^{n+1}Y_i}
\big)\left(\sum_{j=1}^n\frac{Y_j}
{\sum_{l=1}^nY_l}{\bf X}_j\right)+\frac{Y_{n+1}}{\sum_{i=1}^{n+1}Y_i}{\bf X}_{n+1}$$
in which the right hand side holds by \cite[Lemma~1]{sethuraman} (for $n=2$).
\end{proof}
\subsection{The Moments Method}
\begin{proof}[The Fifth Proof]
The general moments $(s_1,s_2,\cdots ,s_k)$ of ${\bf Z}$ are as follows:
$${\rm E}\!\!\left(\prod_{j=1}^k\Big(\sum_{i=1}^n W_jX_{ij}\Big)^{s_j}\right)\!\!=\!{\rm E}\!\!\left(\prod_{j=1}^k\Big(\sum_{h_j}{s_j\choose h_{1j}, h_{2j}, \cdots ,
h_{nj}} \prod_{i=1}^n (W_jX_{ij})^{h_{ij}}\Big)\right)$$
where $\sum_{h_j}$ denotes summation over all non-negative integers
$$h_j=(h_{1j},h_{2j},\cdots ,h_{nj}) \textrm{ subject to }
\sum_{i=1}^{n}h_{ij}=s_j \quad  (j=1,2,\cdots ,n).$$
This
equation  can be rearranged as
$$={\rm E}\left(\sum_{h_1}\cdots \sum_{h_k}\Big(\prod_{j=1}^k{s_j\choose h_{1j}, h_{2j},\cdots,
h_{nj}} \prod_{j=1}^k \prod_{i=1}^n (W_iX_{ij})^{h_{ij}}\Big)\right)$$
$$={\rm E}\left(\sum_{h_1}\cdots \sum_{h_k}\Big(\prod_{j=1}^k{s_j\choose h_{1j}, h_{2j},\cdots,
h_{nj}} (\prod_{i=1}^n W_i^{h_{i\ast}})\prod_{j=1}^k\prod_{i=1}^n X_{ij}^{h_{ij}}\Big)\right)$$
(where $h_{i\ast}=\sum_{j=1}^kh_{ij}$) and also
$$=\sum_{h_1}\cdots \sum_{h_k}\left(\prod_{j=1}^k{s_j\choose h_{1j}, h_{2j},\cdots ,
h_{nj}}{\rm E}\Big(\prod_{i=1}^n W_i^{h_{i\ast}}\Big){\rm E}\Big(\prod_{j=1}^k\prod_{i=1}^n X_{ij}^{h_{ij}}\Big)\right)\quad (\ddag)$$
By  the Dirichlet distribution we have
$${\rm E}\left(\prod_{i=1}^nW_i^{h_{i\ast}}\right)=\frac{\Gamma (\sum_{i=1}^n\sum_{j=1}^k\alpha_j^{(i)})}{\Gamma (\sum_{i=1}^n\sum_{j=1}^k\alpha_j^{(i)}+\sum_{j=1}^k s_j)}\prod_{i=1}^n\frac{\Gamma (\sum_{j=1}^k\alpha_j^{(i)}+h_{i\ast})}{\Gamma (\sum_{j=1}^k\alpha_j^{(i)}}$$
and also
$${\rm E}\left(\prod_{j=1}^k\prod_{i=1}^n X_{ij}^{h_{ij}}\right)=\prod_{i=1}^n{\rm E}\left(\prod_{j=1}^k X_{ij}^{h_{ij}}\right),$$
and again by  the Dirichlet distribution
$${\rm E}\left(\prod_{j=1}^k X_{ij}^{h_{ij}}\right)=\frac{\Gamma (\sum_{j=1}^k\alpha_j^{(i)})}{\Gamma (\sum_{j=1}^k\alpha_j^{(i)}+h_{i\ast})}
\prod_{j=1}^k\frac{\Gamma (\alpha_j^{(i)}+h_{ij})}{\Gamma (\alpha_j^{(i)})}.$$
So, by using $(\ddag)$
$$=\sum_{h_1}\cdots \sum_{h_k}\prod_{j=1}^k {s_j\choose h_{1j}, h_{2j},\cdots ,h_{nj}}\Big(\frac{\Gamma (\sum_{i=1}^n\sum_{j=1}^k\alpha_j^{(i)})}{\Gamma (\sum_{i=1}^n\sum_{j=1}^k\alpha_j^{(i)}+\sum_{j=1}^k s_j)}$$
$$\prod_{i=1}^n\frac{\Gamma (\sum_{j=1}^k\alpha_j^{(i)}+h_{i.})}{\Gamma (\sum_{j=1}^k\alpha_j^{(i)}}\Big)
\left(\prod_{i=1}^n\frac{\Gamma (\sum_{j=1}^k\alpha_j^{(i)})}{\Gamma (\sum_{j=1}^k\alpha_j^{(i)}+h_{i.})}
\prod_{j=1}^k\frac{\Gamma (\alpha_j^{(i)}+h_{ij})}{\Gamma (\alpha_j^{(i)})}\right)$$
$$=\frac{\Gamma (\sum_{i=1}^n\sum_{j=1}^k\alpha_j^{(i)})}{\Gamma (\sum_{i=1}^n\sum_{j=1}^k\alpha_j^{(i)}+\sum_{j=1}^k s_j)}\sum_{h_1}\cdots \sum_{h_k}\prod_{j=1}^k {s_j\choose h_{1j}, h_{2j},\cdots ,h_{nj}}$$
$$\prod_{j=1}^k\prod_{i=1}^n
\frac{\Gamma(\alpha_j^{(i)}+h_{ij})}
{\Gamma(\alpha_j^{(i)})}.$$
By considering the fact that the sum of the Dirichlet-multimonial distributions on their support equals to one, we have
$$=\frac{\Gamma (\sum_{i=1}^n\sum_{j=1}^k\alpha_j^{(i)})}{\Gamma (\sum_{i=1}^n\sum_{j=1}^k\alpha_j^{(i)}+\sum_{j=1}^k s_j)}\prod_{j=1}^k
\frac{\Gamma(\sum_{i=1}^n\alpha_j^{(i)}+s_j)}
{\Gamma(\sum_{i=1}^n\alpha_j^{(i)})}$$
which is the general moment of the k-variate
$$Dirichlet\left(\sum_{i=1}^n \alpha_1^{(i)},\sum_{i=1}^n \alpha_2^{(i)},\cdots ,\sum_{i=1}^n\alpha_k^{(i)}\right)$$
distribution, and since ${\bf Z}$ is a bounded random variable, its distribution is uniquely determined by its moments. Thus the proof is complete.
\end{proof}

\subsection{A Variant of Theorem~\ref{main}}

\begin{theorem}
The distribution of the randomly linear combination  ${\bf Z}=\sum_{i=1}^n W_i{\bf X}_i$ is $$Dirichlet\left(\frac{1}{2}+\sum_{i=1}^{n}\alpha_i\right),$$ where ${\bf X}_1,\cdots,{\bf X}_n$ are two-dimensional  independent multivariate  random vectors with $$Dirichlet\Big(\frac{1}{2}+\alpha_1\Big), \cdots,Dirichlet\Big(\frac{1}{2}+\alpha_n\Big)$$ distributions and the  random vector ${\bf W}=\langle W_1,\cdots,W_n\rangle$  is  independent from $({\bf X}_1,\cdots,{\bf X}_n)$ and has the distribution   $$Dirichlet(\alpha_1,\cdots,
\alpha_n).$$
\end{theorem}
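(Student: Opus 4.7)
The plan is to adapt the moment method of the Fifth Proof of Theorem~\ref{main} to this setting. Since $\mathbf{Z}$ is bounded, its distribution is determined by its joint moments, so it suffices to compute $\mathrm{E}[Z_1^{s_1} Z_2^{s_2}]$ for all non-negative integers $s_1, s_2$ and to verify that the answer matches that of $\mathrm{Dirichlet}(\tfrac{1}{2}+S, \tfrac{1}{2}+S)$, where $S = \sum_{i=1}^n \alpha_i$. First I would expand $Z_j^{s_j}$ via the multinomial theorem and use the independence of $\mathbf{W}$ from $(\mathbf{X}_1, \ldots, \mathbf{X}_n)$ to obtain
\[
\mathrm{E}[Z_1^{s_1} Z_2^{s_2}] = \sum_{h_1, h_2} \prod_{j=1}^{2}\binom{s_j}{h_{1j}, \ldots, h_{nj}}\, \mathrm{E}\Big[\prod_i W_i^{h_{i,1}+h_{i,2}}\Big] \prod_i \mathrm{E}\big[X_{i,1}^{h_{i,1}} X_{i,2}^{h_{i,2}}\big],
\]
with the outer sum over $\sum_i h_{ij} = s_j$ for $j = 1, 2$. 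Substituting the explicit Dirichlet moment formulas for $\mathbf{W}$ and for each $\mathbf{X}_i$ turns the right-hand side into a multi-index Gamma-function sum whose target value is $\Gamma(1+2S)\,\Gamma(\tfrac{1}{2}+S+s_1)\,\Gamma(\tfrac{1}{2}+S+s_2)/[\Gamma(1+2S+s_1+s_2)\,\Gamma(\tfrac{1}{2}+S)^2]$.

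The decisive algebraic tool is the Legendre duplication formula $\Gamma(1+2\alpha_i) = \pi^{-1/2}\, 2^{2\alpha_i}\, \Gamma(\tfrac{1}{2}+\alpha_i)\,\Gamma(1+\alpha_i)$, which reconciles the $\Gamma(1+2\alpha_i)$ prefactors (coming from the Beta normalizations of $\mathbf{X}_i$) with the $\Gamma(\alpha_i)$ prefactors (coming from the weights $\mathbf{W}$). After applying duplication and rearranging ratios, a multivariate Chu--Vandermonde-type collapse should close the multi-index sum into the required form. A cleaner alternative route is induction on $n$: the stick-breaking representation of the Dirichlet writes $W_1 \sim \mathrm{Beta}(\alpha_1, S-\alpha_1)$ independent of $(W_2, \ldots, W_n)/(1-W_1) \sim \mathrm{Dirichlet}(\alpha_2, \ldots, \alpha_n)$, and the inductive hypothesis applied to $\mathbf{Y} := \sum_{j \geq 2} \frac{W_j}{1-W_1}\, \mathbf{X}_j$ reduces the statement for general $n$ to the single case $n = 2$.

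The main obstacle, shared by both routes, is precisely this $n = 2$ identity: that $W\mathbf{X}_1 + (1-W)\mathbf{Y} \sim \mathrm{Dirichlet}(\tfrac{1}{2}+a+b, \tfrac{1}{2}+a+b)$ whenever $W \sim \mathrm{Beta}(a, b)$, $\mathbf{X}_1 \sim \mathrm{Dirichlet}(\tfrac{1}{2}+a, \tfrac{1}{2}+a)$, and $\mathbf{Y} \sim \mathrm{Dirichlet}(\tfrac{1}{2}+b, \tfrac{1}{2}+b)$ are independent. This is \emph{not} a direct instance of Theorem~\ref{main}, because Theorem~\ref{main} would demand the weight $W$ to be $\mathrm{Beta}(1+2a, 1+2b)$ rather than $\mathrm{Beta}(a, b)$. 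Verifying it reduces, via the one-variable moment identity above, to a ${}_3F_2$ hypergeometric summation which (after duplication) should follow from Watson's or Whipple's theorem; alternatively one may appeal to a Beta convex-combination identity already present in the earlier works of Homei cited in the references. I expect this $n = 2$ combinatorial step to be the decisive, non-routine part of the argument.
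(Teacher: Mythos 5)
Your scaffolding is sound as far as it goes: moment determinacy for the bounded ${\bf Z}$, the stick-breaking induction that collapses the general case to $n=2$, and the observation that the $n=2$ case (weight $\mathrm{Beta}(a,b)$, not $\mathrm{Beta}(1+2a,1+2b)$) lies genuinely outside Theorem~\ref{main} are all correct. But the proposal stops exactly where the theorem lives. Written out, the $n=2$ moment identity you must prove is
\begin{equation*}
\sum_{h=0}^{s}\binom{s}{h}\,\frac{(a)_h\,(b)_{s-h}}{(a+b)_s}\cdot\frac{(\tfrac12+a)_h}{(1+2a)_h}\cdot\frac{(\tfrac12+b)_{s-h}}{(1+2b)_{s-h}}\;=\;\frac{(\tfrac12+a+b)_s}{(1+2a+2b)_s},
\end{equation*}
and for this you offer only that it ``should follow from Watson's or Whipple's theorem'' or from an unspecified identity elsewhere. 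That is a conjecture, not a step: in hypergeometric form the sum is a terminating ${}_4F_3$ which is neither Saalsch\"utzian nor well-poised, so neither Watson's nor Whipple's theorem applies off the shelf; it is a quadratic-transformation-type evaluation, and the duplication formula alone does not collapse it. Your first route fails at the same spot for a concrete reason: in the Fifth Proof of Theorem~\ref{main} the multi-index sum telescopes because the $i$-th Dirichlet parameter of ${\bf W}$ equals the parameter total of ${\bf X}_i$, so the factors $\Gamma(\sum_j\alpha_j^{(i)}+h_{i\ast})$ cancel and a Dirichlet-multinomial mass sums to one; here ${\bf W}$ has parameter $\alpha_i$ while ${\bf X}_i$ has total $1+2\alpha_i$, no cancellation occurs, and no Chu--Vandermonde-type collapse is available. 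Since everything else in your argument is routine, the missing summation is the whole theorem.

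The paper closes precisely this step by a semigroup argument that avoids the combinatorics. Put $Y_j\sim\Gamma(\alpha_j,\mu)$ independent, so that ${\bf W}\overset{d}{=}\langle Y_1,\cdots,Y_n\rangle/\sum_l Y_l$ and $Y=\sum_l Y_l$ is independent of ${\bf Z}$. The half-shift coupling between the gamma shape $\alpha_j$ and the Dirichlet parameters $\tfrac12+\alpha_j$ is exactly what makes the laws of ${\bf T}_j=Y_j{\bf X}_j$ multiplicative in the parameter: ${\rm E}\,e^{{\bf t}'{\bf T}_j}=[\Psi({\bf t})]^{\alpha_j}$ with $\Psi$ not depending on $j$ (Kubo, Kuo and Namli (2013), Table~2); in one variable this is the quadratic ${}_2F_1$ evaluation ${\rm E}(1-tX)^{-\alpha}={}_2F_1(\alpha,\alpha+\tfrac12;2\alpha+1;t)=\bigl(2/(1+\sqrt{1-t})\bigr)^{2\alpha}$ for $X\sim\mathrm{Beta}(\tfrac12+\alpha,\tfrac12+\alpha)$. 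Hence $Y{\bf Z}=\sum_j Y_j{\bf X}_j\overset{d}{=}Y'{\bf X}'$ with $Y'\sim\Gamma(\sum_j\alpha_j,\mu)$ independent of ${\bf X}'\sim Dirichlet(\tfrac12+\sum_j\alpha_j)$; cancelling ${\rm E}(Y^{s_1+s_2})$ from the mixed moments gives ${\rm E}(Z_1^{s_1}Z_2^{s_2})={\rm E}\bigl((X_1')^{s_1}(X_2')^{s_2}\bigr)$, and boundedness finishes. If you want to salvage your route, observe that your displayed sum is exactly the coefficient of $t^s$ (after multiplying through by $(a+b)_s/s!$) in the trivial identity $\psi(t)^a\,\psi(t)^b=\psi(t)^{a+b}$ with $\psi(t)=4/(1+\sqrt{1-t})^2$: so the quadratic ${}_2F_1$ evaluation above is both necessary and sufficient to close your gap, and it is the same fact the paper imports in moment-generating-function form.
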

\begin{proof}
Let $Y_j$ ($j=1,\cdots,n$) be  independent random variables independent from $({\bf X}_1,\cdots,{\bf X}_n)$  that have the   distribution $\Gamma(\alpha_j,\mu)$, respectively.
It can be seen, by some classic ways (e.g. ${\rm E}\big(e^{{\bf t}'{\bf T}}\big)=\big[\Psi({\bf t})\big]^{\sum_j\alpha_j}$ from Kubo \& Kuo \&  Namli~(2013), Table~2), that the distribution of ${\bf T}(=\sum_j{\bf T}_j=\sum_j Y_j{\bf X}_j)$ is the same distribution of  ${\bf T}_j$
with the parameter $\sum_j\alpha_j$.
  We can also write ${\bf T}$ as
$${\bf T}=\sum_jY_j{\bf X}_j=\left(\sum_iY_i\right)
\left(\sum_j\frac{Y_j}{\sum_iY_i}X_j\right)$$
and so we have ${\bf T}=Y{\bf Z}$ in which $Y$
has the gamma distribution with the parameter $\sum_j\alpha_j$,
and ${\bf T}$ has the $F$ distribution with the parameter $\sum_j\alpha_j$, and $Y$ and ${\bf Z}$ are independent from each other.
Of course, one can define ${\bf T}'=Y'{\bf X}'$  in such a way that ${\bf T}'\overset{d}{=}{\bf T}$, $\sum_jY_j\overset{d}{=}Y'$ and ${\bf X}'\sim Dirichlet\left(\frac{1}{2}+\sum_j\alpha_j\right)$.
One can conclude that ${\bf Z}$ and ${\bf X}'$ have identical distributions by calculating the general moments $(s_1,s_2)$ of ${\bf T}$ and ${\bf T}'$, i.e., ${\rm E}\big({Z}_1^{s_1}{Z}_2^{s_2}\big)={\rm E}\big(({X}_1')^{s_1}({X}_2')^{s_2}\big)$.
\end{proof}

Actually, the above proof also shows that:

\begin{theorem}
Let $Y_j$ \textup{(}$j=1,\cdots,n$\textup{)} be  independent random variables independent from $({\bf X}_1,\cdots,{\bf X}_n)$  that have the   distribution $\Gamma(\alpha_j,\mu)$, respectively, where ${\bf X}_i$'s are independent from each other and have Dirichlet distributions. If  ${\bf X}$ has a bounded support and the independent random variable $Y$ has $\Gamma(\sum_j\alpha_j,\mu)$ distribution such that $\sum_iY_i{\bf X}_i\overset{d}{=}Y{\bf X}$, then ${\bf X}$ and ${\bf Z}=\sum_i W_i{\bf X}_i$ have identical distributions, where ${\bf W}=\langle W_1,\cdots,W_n\rangle$ is independent from ${\bf X}_i$'s and has Dirichlet$(\alpha_1,\cdots,\alpha_n)$ distribution.
\end{theorem}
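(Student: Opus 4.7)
My plan is to factor $\sum_i Y_i\mathbf{X}_i$ as a product of a scalar and a vector whose independent factors match those on the right-hand side of the hypothesized identity $Y\mathbf{X}$, and then strip off the scalar by comparing moments.

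First I would set $S:=\sum_{i=1}^n Y_i$ and $W_j:=Y_j/S$. By the classical gamma/Dirichlet construction, $S\sim\Gamma(\sum_j\alpha_j,\mu)$ is independent of $\mathbf{W}:=(W_1,\ldots,W_n)\sim\mathrm{Dirichlet}(\alpha_1,\ldots,\alpha_n)$, and both are independent of $(\mathbf{X}_1,\ldots,\mathbf{X}_n)$. Writing $Y_j=SW_j$ yields
$$\sum_{i=1}^n Y_i\mathbf{X}_i \;=\; S\sum_{j=1}^n W_j\mathbf{X}_j \;=\; S\cdot\mathbf{Z},$$
with $S$ independent of $\mathbf{Z}$ and $S\stackrel{d}{=}Y$. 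The hypothesis therefore supplies $S\cdot\mathbf{Z}\stackrel{d}{=}Y\cdot\mathbf{X}$ as an equality in law of two independent products whose scalar factors share a common distribution.

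Next I would exploit this product structure through joint moments. For any multi-index $(s_1,\ldots,s_k)$ of nonnegative integers with $|s|=s_1+\cdots+s_k$, independence factorises each side into
$$E(S^{|s|})\,E\!\left(\prod_{\ell=1}^{k} Z_\ell^{s_\ell}\right)\;=\;E(Y^{|s|})\,E\!\left(\prod_{\ell=1}^{k} X_\ell^{s_\ell}\right).$$
The gamma moments $E(S^{|s|})=E(Y^{|s|})$ are strictly positive and finite, so they cancel, leaving equality of all joint moments of $\mathbf{Z}$ and $\mathbf{X}$. Both vectors are supported in a bounded subset of $\mathbb{R}^k$ (by hypothesis for $\mathbf{X}$, and because $\mathbf{Z}$ is a convex combination of Dirichlet-distributed vectors contained in $[0,1]^k$), so the multivariate Hausdorff moment problem is determinate and forces $\mathbf{Z}\stackrel{d}{=}\mathbf{X}$.

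The main obstacle I anticipate is justifying the scalar cancellation cleanly: one needs both the gamma/Dirichlet independence of $S$ from $\mathbf{W}$ and the blanket independence of the $Y_j$'s from the $\mathbf{X}_j$'s in order to conclude $S\perp\mathbf{Z}$, and then the strict positivity of gamma moments to divide through. Once these two ingredients are in place, the boundedness hypothesis reduces everything to a determinate moment problem and the conclusion is immediate.
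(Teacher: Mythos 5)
Your proposal is correct and takes essentially the same route as the paper: the paper likewise factors $\sum_i Y_i{\bf X}_i=\big(\sum_i Y_i\big){\bf Z}$ with the normalized gammas realizing the independent Dirichlet weights, invokes the hypothesis $\sum_i Y_i{\bf X}_i\overset{d}{=}Y{\bf X}$, and concludes ${\bf Z}\overset{d}{=}{\bf X}$ by comparing the general moments $(s_1,\cdots,s_k)$ of the two sides, with the bounded support ensuring moment determinacy. Your write-up simply makes explicit two steps the paper leaves implicit --- the cancellation of the common positive gamma moments and the appeal to the (Hausdorff) determinacy of the moment problem --- which is a welcome tightening rather than a different argument.
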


\section{Some Applications in Stochastic Differential Equations}
In this section, using Theorem~\ref{main} and Corollary~\ref{cor}, we prove some interesting mathematical facts.
As an example consider the following differential equation for each $n$ (cf. Homei~(2014)):
$$(1)\quad \frac{(-1)^{n-1}}{(n-1)!}
\frac{n-1}{2}\frac{d^{n-1}}{dz^{n-1}}\int_0^1 (1-t)^{(n-3)/2}(z^2-t)^{-1/2}dt
=\left(\frac{1}{\sqrt{z^2-1}}\right)^n,$$
which could be of interest for some authors, who first guess the solution and then, by using techniques like Leibniz differentiations or change of variables or integration by part, prove that it   satisfies    the equation (by some long inductive arguments).


Let us recall that
 Theorem~1 of Homei~(2015) identifies the distribution of (the $1$-dimensional) ${Z}$ from the distributions of ${X}_i$'s by means of the differential equation:
$$(2)\qquad \frac{(-1)^{n^*-1}}{(n^*-1)!}
\frac{d^{n^*-1}}{dz^{n^*-1}}{\cal S}(F_{Z},z)=\prod_{i=1}^n
\frac{(-1)^{m_i-1}}{(m_i-1)!} \frac{d^{m_i-1}}{dz^{m_i-1}}
{\cal S}(F_{X_i},z),$$
where $F_Y$ denotes the cumulative distribution function of a random variable $Y$ and ${\cal S}(F_Y,z)$ is  defined by ${\cal S}(F_Y,z)=\int_{\mathbb{R}} \frac{1}{z-x}F_Y(dx)$ for $z \in \mathbb{C}\cap ({\rm supp} F_Y)^\complement$ in which ${\rm supp} F_Y$  stands for the support of $F_Y$.


Let the distribution of ${\bf X}_i$ ($i=1,\cdots,n$) be Arcsin, that is ${\cal S}(F_{X_i},z)=\frac{1}{\sqrt{z^2-1}}$. Also, let $m_i=1$ ($i=1,\cdots,n$) and $n^\ast=n$. Then from the equation~(2) we will have
$$(3)\qquad \qquad \frac{(-1)^{n-1}}{(n-1)!}\frac{d^{n-1}}{dz^{n-1}}{\cal S}(F_{Z},z)=\left(\frac{1}{\sqrt{z^2-1}}\right)^n.$$
The solution of the equation~(3) identifies  the Stieltjes transformation of the distribution of ${Z}$. Alternatively, from Theorem~\ref{main} (or Corollary~\ref{cor}) the distribution of $Z$ is power semicircle (see Homei~(2014) for more details).
Since the Stieltjes transformation of the power semicircle distribution is $\frac{n-2}{2}\int_0^1 (1-t)^{(n-3)/2}(z^2-t)^{-1/2}dt$ (see e.g. Arizmendi \& Perez-Abreu~(2010))
then by substituting it in the equation~(2) we will get the equation~(1) immediately.

As another example  consider the moment generating function on  the vector ${\bf T}_j$:
$${\rm M}_{{\bf T}_j}({\bf t}')={\rm E}\big(\exp({{\bf t}'Y_j{\bf X}_j})\big).$$
Using the double conditional expectation and the fact that the components of ${\bf T}_j$ are independent with gamma distributions  we have
$${\rm E}\big(\frac{1}{1-{\bf t}'{\bf X}_j}\big)^{\sum_{i=1}^{k}\alpha_i^{(j)}}=
\prod_{i=1}^k\big(\frac{1}{1-{t}_i}\big)^{\alpha_i^{(j)}}$$
which proves   Proposition~4.4 of  Kerov \&  Tsilevich~(2004) (cf. also Karlin \& Micchelli \&   Rinott~(1986), page~77).

\section{Conclusions}
\noindent Some sporadic works of other authors have been unified here; the main result (Theorem~\ref{main}) has several other different proofs (available upon request) each of which can have various applications (the five proofs presented here are dedicated to all the family members of Professor {\sc A.R. Soltani}).
Our method reveals the advantage of the method of Stieltjes transforms for identifying the distribution of stochastic linear combinations, first used by van Assche~(1987) and later generalized by others.

\section*{References}

\end{document}